\newtheorem{theorem}{Theorem}
\newtheorem*{lemma*}{Lemma}
\newtheorem{lemma}{Lemma}
\newtheorem*{theorem*}{Theorem}
\newtheorem{proposition}{Proposition}
\newcommand{\tF}{\tilde{F}}
\newcommand{\ds}{\displaystyle}
\newcommand{\ep}{\varepsilon}
\newcommand{\cH}{\mathcal{H}}
\newcommand{\K}{{\bf K}}
\newcommand{\ka}{\kappa}
\newcommand{\sn}{\hbox{sl}}
\newcommand{\cn}{\hbox{cl}}
\begin{document}
\title{Bounding Solutions of a Forced Oscillator}
\author{Kenneth R. Meyer}
\thanks{ Research partially supported by the Charles Phelps Taft Foundation.}
\address{Department of Mathematical Sciences \\
University of Cincinnati \\
Cincinnati, Ohio 45221-0025}
\email{ken.meyer@uc.edu}
\author{Dieter S Schmidt}
\address{Department of   Computer Science \\
University of Cincinnati \\
Cincinnati, Ohio 45221-0030}
\email{dieter.schmidt@uc.edu}

\begin{abstract}
We show that all solutions are bounded for a periodically forced nonlinear oscillator by using a special set of coordinates, simplifying the system with a convergent Lie transformation and then by showing the period map has large invariant curves  by Moser's invariant curve theorem.
\end{abstract}

\date{\today}
\keywords{nonlinear oscillator, bounded solutions, invariant curve, twist map}
\subjclass{34C15, 34K12, 37E40, 70H08}

\maketitle

\centerline{To the memory of George Sell, our colleague and friend.}

\section*{Introduction}
In 1976 G. R. Morris \cite{granger} showed that all solutions of
$$
\ddot{x}+2x^3=p(t),
$$
are bounded when $p(t)$ is continuous and periodic. This gave rise to an abundance of generalizations  \cite{eddie,mark1,mark2,liu1,liu2,yuan}  with references to many more.
All these proofs depend on showing that Moser's invariant curve theorem \cite{jurgen} implies the existence of invariant curves near infinity for the period map.  We will too.

Here we  give a simple direct proof of the generalization of Morris' result found in Dieckerhoff and  Zehnder \cite{eddie}, but our proof  requires much less differentiability.  
\vspace{0.1cm}
\noindent
\begin{theorem}\label{mainthm}
For any integer $n>1$ all solutions of
\begin{equation}\label{maineq}
\ddot{x} + nx^{2n-1}= p_0(t)+p_1(t)x+\cdots+p_{2n-2}(t)x^{2n-2}
\end{equation}
are bounded where the coefficients  of the polynomial on the right hand side are T-periodic and satisfy $p_0(t)\in C^0$, $p_j(t)\in C^1$ for $j=1,\ldots,n-1$ and $p_j(t)\in C^2$ for $j=n,\ldots,2n-2$.
\end{theorem}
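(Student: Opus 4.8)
The plan is to realize \eqref{maineq} as a time-periodic Hamiltonian system and to show that far out in phase space its period map is a small, smooth perturbation of an integrable twist map, so that Moser's invariant curve theorem \cite{jurgen} produces invariant curves at arbitrarily large amplitude. Setting $y=\dot x$, the equation is Hamilton's equations for
\[
H(x,y,t)=\tfrac12 y^2+\tfrac12 x^{2n}-\sum_{j=0}^{2n-2}\frac{p_j(t)}{j+1}\,x^{j+1},
\]
which is $T$-periodic in $t$. The unperturbed part $H_0=\tfrac12 y^2+\tfrac12 x^{2n}$ is integrable, its level sets are closed curves encircling the origin, and it carries the scaling symmetry $(x,y,t)\mapsto(\lambda x,\lambda^{n} y,\lambda^{1-n}t)$. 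The first step is to introduce coordinates adapted to this symmetry, namely action--angle variables $(I,\theta)$ in which $H_0=h(I)$.

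The scaling forces $I\mapsto\lambda^{n+1}I$ and $H_0\mapsto\lambda^{2n}H_0$, so $h(I)\sim c\,I^{2n/(n+1)}$ and the frequency $\omega(I)=h'(I)$ grows like $I^{(n-1)/(n+1)}$; since $n>1$ this gives $\omega'(I)\neq0$, the \emph{twist} that Moser's theorem requires. In these coordinates the perturbation becomes a $T$-periodic function of $(I,\theta,t)$, and on an orbit of energy $E$ one has $x\sim E^{1/(2n)}$, so the monomial $p_j x^{j+1}$ measured against $H_0$ is of relative size $E^{(j+1-2n)/(2n)}\to0$ as $I\to\infty$. The slowest decay, $E^{-1/(2n)}$, comes from the top degree $j=2n-2$, so the high-degree terms are the delicate ones. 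A convergent sequence of symplectic Lie transformations is then used to average away the angle dependence, pushing the residual angle-dependent part below the threshold demanded by Moser's theorem while preserving the twist.

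After this normalization the time-$T$ map takes the form
\[
\theta'=\theta+\omega(I)\,T+f(I,\theta),\qquad I'=I+g(I,\theta),
\]
with $f,g$ small and sufficiently differentiable for $I$ large. Checking the hypotheses of the invariant curve theorem yields, for every sufficiently large level, an invariant closed curve of the period map surrounding the origin. Boundedness now follows by a trapping argument: each such curve bounds an invariant topological disk, and since these curves occur at arbitrarily large amplitude, every orbit of the period map, and hence every solution of \eqref{maineq}, is confined to the interior of one of them for all time.

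I expect the main obstacle to be the differentiability bookkeeping. Moser's theorem requires a fixed number of derivatives of the period map, and the whole point of the sharpened hypotheses $p_0\in C^0$, $p_j\in C^1$ for $j\le n-1$, and $p_j\in C^2$ for $j\ge n$ is to guarantee exactly this after the loss of smoothness incurred in passing to action--angle variables and in each Lie transformation. The hard part is to show that the Lie series converges and that the normal-form remainder retains enough derivatives under these minimal regularity assumptions, the degree-dependent smoothness reflecting that the faster-growing monomials $p_j x^{j+1}$ demand more derivatives to control.
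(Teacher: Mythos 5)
Your proposal follows essentially the same route as the paper: pass to action--angle variables for $\tfrac12 y^2+\tfrac12 x^{2n}$ (where $h(K)=\tfrac{n+1}{2n}K^{2n/(n+1)}$ supplies the twist for $n>1$), remove the angle dependence from the slowly decaying high-degree terms by convergent symplectic Lie transformations, and then apply Moser's invariant curve theorem to the period map together with the standard trapping argument. You also correctly identify the real work as the convergence of the Lie transformation at parameter value one and the degree-dependent differentiability bookkeeping, which is exactly where the paper spends its effort (two finite-generating-function Lie transforms with explicit remainder estimates).
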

 
Our proof is  based on a  generalization of action-angle variables,  a convergent Lie transformation, and Moser's invariant curve theorem.   
As an overall outline we give a quick proof of Morris' original theorem. Then the  full theorem is established in two steps.  
In the  first step we define a special case and then follow the proof given for Morris' Theorem to prove boundedness for the special  case.  In the second step we use the method of Lie transforms to reduce the general case to the special case.

\section*{Action--Angle Variables}

Let $\sn (\ka)$ be the solution of the reference equation 
$$
\xi '' +n\xi^{2n-1}=0,\,\,\,\xi(0)=0,\,\,\,\xi'(0)=1,
$$
where $\ds  '=\frac d{d\ka}$ and let $\cn (\ka)=\sn'(\ka)$.  When $n=1$ these are  the standard  sine and cosine functions and when $n=2$ these are the  lemniscate functions \cite{ww}.

The Hamiltonian for this equation is
$$
L=\frac 12 \eta^2 +\frac 12 \xi^{2n},
$$
where $\eta=\xi'$. Since the level sets of $L$ are ovals these solutions are periodic.  

Both $\sn (\ka )$ and $-\sn(-\ka )$ satisfy  the reference equation and the same  initial conditions,   so  by  the
uniqueness    theorem    for    ordinary    differential     equations,
$\sn (\ka ) = -\sn(-\ka)$, i.e., $\sn(\ka )$ is odd.
As $\ka$ increases from zero, $\sn (\ka )$ increases  from  zero  until  it
reaches its maximum  value  of  1  after  some  time $\tau>0$.  Take the equation $L=1/2$, solve for $\eta = d\xi/d\ka$, separate variables, and integrate to get
$$
\tau = \int_0^1 \frac {d\xi}{\sqrt{1-\xi^{2n}}}.
$$
Both $\sn (\tau+\ka )$ and $\sn (\tau-\ka)$ satisfy the equation and the same initial condition when $\ka=0$,  so by uniqueness of the solutions  of  differential  equations  it
follows that $\sn (\tau +\ka) = \sn (\tau-\ka)$, or that $\sn(\ka)$ is even about $\tau$, $\sn (\ka )$ is $4\tau $ periodic and  odd
harmonic.  
It is clear that $\sn(\ka )$  is increasing for $-\tau  < \ka < \tau $ and that 
$\sn''(\ka ) > 0$ (so $\sn (\ka )$ is convex) for $-\tau  < \ka < 0,$ and 
${\sn}''(\ka )< 0$ (so $\sn(\ka )$ is concave) for $0 < \ka < \tau $.  Thus, $\sn(\ka )$ has the same basic
symmetry properties as the sine function  with $4\tau$ playing the role of $2\pi$. In a like manner $\cn(\ka)$ has the same basic symmetry properties as the cosine function.

In summary
$$
\sn(0) =0,\,\,\, \sn' ( 0)=1,\,\,\,\cn (0)=1,\,\,\,\cn' (0)=0;
$$
$$
\cn^2(\ka)+\sn^{2n}(\ka)=1;
$$
$$
\sn'(\ka)=\cn(\ka),\quad \cn'(\ka)=-n\; \sn^{2n-1}(\ka);
$$
$$
\sn (\ka ) = -\sn(-\ka), \quad \sn (\tau+\ka ) = \sn (\tau-\ka), \quad \sn(\ka)=\sn(\ka+4\tau);
$$
$$
\cn (\ka ) = \cn(-\ka), \quad \cn (\tau+\ka) = -\cn (\tau-\ka), \quad \cn(\ka)=\cn(\ka+4\tau).
$$

To get {\em action--angle} variables $(K,\ka)$,  let
$$
x= K^{\frac 1{n+1}}\sn(\ka), \qquad y=-K^{\frac n{n+1}}\cn(\ka),
$$
and check 
$$
\begin{array}{rcl}
dx\wedge dy &= & \ds
\left( \frac 1{n+1}K^{-\frac {n}{n+1}}\sn(\ka)dK+K^{\frac 1{n+1}}\cn(\ka)d\ka \right) \wedge
\\ && \qquad \qquad \ds \left(\frac {-n}{n+1}K^{-\frac {1}{n+1}}\cn (\ka)dK  +nK^{\frac n{n+1}} \sn^{2n-1}(\ka)d\ka \right),
\\ \\ &=&  \ds
\frac n{n+1}\sn^{2n}(\ka) dK\wedge d\ka - \frac n{n+1}\cn^2(\ka)d\ka \wedge  dK,
\\ \\ &=& \ds
\frac n{n+1}dK\wedge d\ka,
\end{array}
$$
which  is symplectic with multiplier $\ds \frac {n+1}n$.  

In these coordinates 
$$
L  =\frac 12 (y^2+x^{2n}) =  \frac {n+1}{n}\;\frac 12 \left( K^{\frac {2n}{n+1}}\cn^2(\ka) +K^{\frac {2n}{n+1}}\sn^{2n}(\ka)\right) = 
\frac {n+1}{2n} K^{\frac {2n}{n+1}}.
$$

The Hamiltonian for equation (\ref{maineq}) is
\begin{equation}\label{originalham}
H=\frac 12 (y^2+  x^{2n})-\sum_{j=0}^{2n-2}p_j(t)\frac {x^{j+1}}{(j+1)}
\end{equation}
and in action-angle variables,
\begin{equation}\label{originalKk}
H=\frac {n+1}{2n} K^{\frac{2n}{n+1}}+\sum_{j=1}^{2n-1}K^{\frac{j}{n+1}}f_j(\ka,t) 
\end{equation}
where  we have set 
$ \ds f_j(\ka,t)= -\frac{n+1}{j\;n}\;\sn^j(\ka)\;p_{j-1}(t) $

\section*{Proof of Morris' Theorem}

In order to illustrate  how Moser's invariant curve theorem can be applied with the action-angle coordinates $(K,\ka)$ in a simple setting
let us prove  Morris' original theorem. So look at the  equation,
$$
\ddot{x}+2x^3=p(t),
$$
were $p(t)$ is continuous and T-periodic.
In action-angle variables,  $(K,\ka)$, the Hamiltonian is
$$
H=\frac 34 K^{4/3}-\frac 32K^{1/3} \sn (\ka)p(t),
$$
and the equations of motion are
$$
\begin{array}{l}
\ds
\dot{K}=-\frac 32K^{1/3} \cn (\ka)p(t),
\\ \\ \ds
\dot{\ka}=-K^{1/3}+\frac 12 K^{-2/3}\sn(\ka)p(t).
\end{array}
$$

Let $\ds \Lambda=K^{1/3}$ so the equations become 

$$
\begin{array}{l}
\ds
\dot{\Lambda}=-\frac 12\Lambda^{-1} \cn (\ka)p(t),
\\ \\ \ds
\dot{\ka}=-\Lambda+\frac 12 \Lambda^{-2}\sn(\ka)p(t).
\end{array}
$$
First note that since $\sn,\,\cn,$ and $p$ are all uniformly bounded these equations are analytic for all $\ka,\,t$ and $\Lambda >0$.

Integrate from 0 to $-T$ to compute the period map $\mathcal{P}:(\Lambda,\ka)\to (\Lambda^*,\ka^*)$ were 
$$
\begin{array}{l}
\Lambda^*=\Lambda + F(\Lambda,\ka),
\\ \\
\ka^*=\ka +T\Lambda+G(\Lambda,\ka),
\end{array}
$$where $F(\Lambda,\ka)=O(\Lambda^{-1}),\,\,G(\Lambda,\ka)=O(\Lambda^{-2})$.

An {\em encircling curve} is a curve, $\mathcal{C}$,  of the form  $K=\phi(\ka)$ (or  $\Lambda=\phi(\ka)$) were $\phi$ is continuous, $4\tau$-periodic, positive and near a circle about the origin.
The invariant curve  theorem  requires the the image of an encircling curve must intersect itself. 
The coordinates $(K,\ka)$ are symplectic and so the period map is area preserving and thus the image under $\mathcal{P}$ of any encircling curve must intersect itself in the $(K,\ka) $
coordinates.  The map from $K$ to $\Lambda$ is invertible from $K>0$ to $\Lambda>0$ so the image of an encircling curve in $(\Lambda,\ka)$ coordinates must intersect itself.   The curve is invariant if $\mathcal{P}(\mathcal{C})=\mathcal{C}.$

The invariant curve theorem requires that the functions $F,\, G$ have at least $\ell$ derivatives were originally $\ell=333$, but recent work has reduced it to $\ell=5$.
No matter our system is analytic and we only need a continuous invariant curve.

 Let   $\mathcal{A}(a)$ be the annulus $\{ (\Lambda,\ka): 1 < a\le \Lambda  \le a+1\}$ and define the $\ell^{th}$ norm of a function $R$ on $\mathcal{A}(a)$ to be
$$
|R|_\ell = \sup \left| \left( \frac {\partial}{\partial \Lambda}\right)^{\sigma_1} \left( \frac {\partial}{\partial \ka}\right)^{\sigma_2}R(\Lambda,\ka)\right|
$$
where the $\sup$ is over all $0 \le \sigma_1+\sigma_2 \le \ell$ , and all $(\Lambda,\ka) \in \mathcal{A}(a)$.  
From the form of the equations
$$
|F|_\ell = O(a^{-1}), \,\,\,\,\,\,\,|G|_\ell = O(a^{-2}).
$$

Moser's theorem says there is a $\delta >0$  depending on the given data such that if $|F|_\ell < \delta,\,\, |G|_\ell <\delta$ then there is an invariant encircling curve in 
$\mathcal{A}(a)$.   From the above there is an $a^*$ such that for all $a>a^*$ we have $|F|_\ell < \delta,\,\, |G|_\ell <\delta$ on $\mathcal{A}(a)$.  So the period map
$\mathcal{P}$ has arbitrarily large invariant encircling curves, all solutions are bounded and Morris' Theorem is established.

\section*{The Special Case}

Our special case is  the Hamiltonian in action-angle variables of the form
\begin{equation}\label{HSCKk}
\cH=\frac {n+1}{2n} K^{\frac{2n}{n+1}}+\sum_{j=2}^{2n-1}K^{\frac{j}{n+1}}\overline{f}_j(t)+\sum_{j=-\infty}^{1}K^{\frac{j}{n+1}}f_j(\ka,t),
\end{equation}
where 
\begin{enumerate}
\item 
$\overline{f}_j(t)$ is continuous and $T$-periodic in $t$ for $j=2,\ldots,2n-1$, \vspace{5pt} 
\item 
$f_j(\ka,t)$  is analytic and $4\tau$-periodic in $\ka$, continuous and $T$-periodic in $t$  for $j=-\infty,\ldots, 1$, \vspace{5pt}
\item
 the infinite series in (\ref{HSCKk}) is uniformly convergent for  $K\ge \K$ and all $\ka,\,t$  with $\K>0$  a constant. \vspace{5pt}
\end{enumerate}

In $\cH$ the $\ka$ dependence has been removed from some terms to  facilitate  the proof at a cost of many extra terms. 
The extra terms are created when the original  Hamiltonian (\ref{originalKk}) is normalized in the next section.

\begin{proposition}
All solutions of the equations with Hamiltonian (\ref{HSCKk}) are bounded.
\end{proposition}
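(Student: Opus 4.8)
The plan is to follow the proof of Morris' Theorem line for line, regarding the $\ka$--independent part of $\cH$ as an exactly integrable twist and the $\ka$--dependent part as a small perturbation. Split $\cH=\cH_0+\cH_1$ with $\cH_0=\frac{n+1}{2n}K^{2n/(n+1)}+\sum_{j=2}^{2n-1}K^{j/(n+1)}\overline{f}_j(t)$ and $\cH_1=\sum_{j=-\infty}^{1}K^{j/(n+1)}f_j(\ka,t)$. As in the Morris argument the equations of motion are $\dot K=\partial_\ka\cH$, $\dot\ka=-\partial_K\cH$, and the flow preserves $dK\wedge d\ka$. Since $\partial_\ka\cH_0\equiv 0$, the radial equation $\dot K=\partial_\ka\cH_1=\sum_{j\le 1}K^{j/(n+1)}\partial_\ka f_j$ is driven solely by $\cH_1$ and is $O(K^{1/(n+1)})$, whereas $\dot\ka=-K^{(n-1)/(n+1)}$ plus $\ka$--independent corrections coming from the $\overline{f}_j$ and strictly smaller $\ka$--dependent terms.

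Next I would set $\Lambda=K^{1/(n+1)}$, exactly as in Morris' proof, which both turns the leading rotation into a monotone twist and makes the radial drift decay: $\dot\Lambda=O(\Lambda^{-(n-1)})$. The flow of $\cH_0$ alone keeps $K$, hence $\Lambda$, exactly constant and advances the angle by $\omega_0(\Lambda)=-\int_0^{-T}\partial_K\cH_0\,dt$, a function of $\Lambda$ alone whose leading term is a nonzero multiple of $\Lambda^{n-1}$; the $\overline{f}_j$ contribute only strictly lower--order corrections, so $\omega_0'(\Lambda)$ stays bounded away from zero for large $\Lambda$. Thus the $\cH_0$--period map $\mathcal{P}_0$ is an exact monotone twist map, and integrating the full system from $0$ to $-T$ yields
$$
\mathcal{P}:\quad \Lambda^*=\Lambda+F(\Lambda,\ka),\qquad \ka^*=\ka+\omega_0(\Lambda)+G(\Lambda,\ka),
$$
a perturbation of $\mathcal{P}_0$ in which $F,G$ carry the effect of $\cH_1$ and of the non--commutativity of the two flows.

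I would then reproduce the three checks from the Morris argument. The vector field is analytic in $(\Lambda,\ka)$ on $K\ge\K$ and only continuous in $t$, so by analytic dependence of ODE solutions on initial conditions $\mathcal{P}$ is analytic in $(\Lambda,\ka)$; hence $F,G$ are far smoother than Moser requires and only a continuous invariant curve is needed. The intersection property holds because $\mathcal{P}$ preserves $dK\wedge d\ka$, so images of encircling curves self--intersect in $(K,\ka)$, a property that transfers to $(\Lambda,\ka)$ since $K\mapsto\Lambda$ is invertible for $K>0$. Finally one must show $|F|_\ell=O(a^{-p})$ and $|G|_\ell=O(a^{-q})$ with $p,q>0$ on $\mathcal{A}(a)$; once this is in hand, for all large $a$ both lie below Moser's threshold $\delta$, Moser's theorem gives invariant encircling curves at arbitrarily large $\Lambda$, and boundedness follows exactly as before.

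The main obstacle is the last estimate, and precisely the decay of the full $C^\ell$ norms, not merely the sup norms, of $F$ and $G$. This requires differentiating $\mathcal{P}$ --- equivalently the solution of the ODE with respect to its initial data --- up to order $\ell$ and bounding the variational equations by Gronwall estimates, all while tracking the negative powers of $K$ furnished by $\cH_1$ and the convergent tail $\sum_{j\le 1}$, whose term--by--term differentiation and integration on $K\ge\K$ must also be justified from the uniform convergence. A subtler point is the coupling between the radial drift and the twist: the angular error picks up a term of size $\omega_0'(\Lambda)\,F=O(\Lambda^{-1})$, and one must confirm that this and the analogous higher derivatives still tend to zero; with the choice $\Lambda=K^{1/(n+1)}$ the radial drift is small enough ($F=O(\Lambda^{-(n-1)})$) that they do. Checking that the twist inequality $\omega_0'(\Lambda)\neq 0$ survives the $\overline{f}_j$ corrections is then comparatively routine, since the leading term dominates.
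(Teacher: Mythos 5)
Your overall strategy is the paper's: treat the $\ka$-independent part of $\cH$ as an integrable twist, the tail $\sum_{j\le 1}$ as a perturbation, pass to a power of $K$ as radial variable, and apply Moser's theorem on annuli $\mathcal{A}(a)=\{1<a\le\Lambda\le a+1\}$. But your choice $\Lambda=K^{1/(n+1)}$ creates a real gap for $n\ge 3$. With that choice the unperturbed rotation is $\omega_0(\Lambda)\sim T\Lambda^{n-1}$, so $\omega_0'(\Lambda)\sim(n-1)Ta^{n-2}$ on $\mathcal{A}(a)$: bounded away from zero, as you say, but unbounded above as $a\to\infty$. Moser's theorem, in the form you (and the paper) invoke it --- a single threshold $\delta>0$ such that $|F|_\ell<\delta$ and $|G|_\ell<\delta$ force an invariant encircling curve --- presupposes a twist derivative pinched between fixed positive constants on the annulus; the admissible $\delta$ depends on those constants and degrades as the twist grows, so you cannot run the argument with one $\delta$ uniformly over all large $a$. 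For $n=2$ (Morris' case) one has $K^{1/(n+1)}=K^{(n-1)/(n+1)}$ and the issue is invisible, which is why copying Morris line for line misleads here.

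The paper's fix is to set $\Lambda=K^{(n-1)/(n+1)}$ instead. Then $\dot\ka=-\Lambda+\cdots$, the twist is $\alpha(\Lambda)=T\Lambda+\sum_{j=2}^{2n-1}\sigma_j\Lambda^{(j-n-1)/(n-1)}$ with $\alpha'(\Lambda)=T+O(\Lambda^{-1/(n-1)})$, hence $\tfrac12 T<\alpha'(\Lambda)<2T$ on $\mathcal{A}(a)$ for all large $a$, and one fixed $\delta$ serves every such annulus. The perturbations still decay, $F=O(\Lambda^{-1/(n-1)})$ and $G=O(\Lambda^{-n/(n-1)})$, so the remaining steps of your argument (intersection property from preservation of $dK\wedge d\ka$ together with invertibility of $K\mapsto\Lambda$, analyticity of the vector field in $(\Lambda,\ka)$ for $K\ge\K$, arbitrarily large invariant encircling curves) go through unchanged. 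Your version could be repaired by the further rescaling $\rho=\omega_0(\Lambda)$, but that is precisely the paper's change of variable in disguise; as written, the uniform-in-$a$ application of Moser's theorem is not justified.
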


\begin{proof}

The equations of motion are
$$ 
\begin{array}{rcl}
\ds
\dot{K}&=& \ds  \sum_{j=-\infty}^{1}K^{\frac{j}{n+1}}\frac {\partial f_j(\ka,t)}{\partial \ka},
\\ \\ \ds
\dot{\ka}&= & \ds -K^{\frac {n-1}{n+1}}  
-\sum_{j=2}^{2n-1}\left(\frac {j}{n+1}\right)K^{\frac{j-n-1}{n+1}}\overline{f}_j(t)
-\sum_{j=-\infty}^{1}\left(\frac {j}{n+1}\right)K^{\frac{j-n-1}{n+1}}f_j(\ka,t).
\end{array}
$$
Set $\Lambda=K^{\frac{n-1}{n+1}}$ so that the differential equations are now
$$ 
\begin{array}{rcl}
\ds
\dot{\Lambda}&=& \ds \frac{n-1}{n+1} \sum_{j=-\infty}^{1}\Lambda^{\frac{j-2}{n-1}}\frac {\partial f_j(\ka,t)}{\partial \ka},
\\ \\ \ds
\dot{\ka}&= & \ds -\Lambda  
-\sum_{j=2}^{2n-1}\left(\frac {j}{n+1}\right)\Lambda^{\frac{j-n-1}{n-1}}\overline{f}_j(t)
-\sum_{j=-\infty}^{1}\left(\frac {j}{n+1}\right)\Lambda^{\frac{j-n-1}{n-1}}f_j(\ka,t).
\end{array}
$$
The above are series in $\Lambda^{\frac 1{n-1}}$ which are convergent for large $\Lambda$.  The two infinite series will be
treated as perturbations and the finite series in $\dot{\ka}$ contributes  to the twist term.
 The dominate term in the infinite series for $\dot{\Lambda}$ is  of order $\Lambda^{\frac {-1}{n-1}}$ and for
$\dot{\ka}$ it is  of order $\Lambda^{\frac {-n}{n-1}}$.  

We are interested in large $K$, that is large  $\Lambda$ so that 
$$ 
\begin{array}{rcl}
\ds
\dot{\Lambda}&=& O(\Lambda^{\frac {-1}{n-1}}),
\\ \\ \ds
\dot{\ka}&= & \ds -\Lambda -\sum_{j=2}^{2n-1}\left(\frac {j}{n+1}\right)\Lambda^{\frac{j-n-1}{n-1}}\overline{f}_j(t)+O(\Lambda^{\frac {-n}{n-1}}),
\end{array}
$$
where the estimates are on  $K\ge \K$.
Integrating from $0$ to $-T$ to compute the period map $\mathcal{P}:(\Lambda,\ka)\to (\Lambda^*,\ka^*)$ gives
$$
\begin{array}{l}
\Lambda^*=\Lambda+F(\Lambda,\ka),
\\ \\ \ds
\ka^*=\ka +\alpha(\Lambda) +G(\Lambda,\ka),
\end{array}
$$
where $F(\Lambda,\ka)=O(\Lambda^{\frac {-1}{n-1}}),\,\, G(\Lambda,\ka)=O(\Lambda^{\frac {-n}{n-1}})$. The twist term is 
$$
\alpha(\Lambda)=T\Lambda +\sum_{j=2}^{2n-1} \sigma_j \Lambda^{\frac{j-n-1}{n-1}},\,\,\hbox{ with }\,\,
\sigma_j=\frac j{n+1} \int_0^{-T} \overline{f}_j(t)dt,
$$ 
and its derivative is 
$$
\frac {d\alpha(\Lambda)}{d\Lambda}= 
T +\sum_{j=2}^{2n-1}\frac{j-n-1}{n-1}\sigma_j\Lambda^{\frac {j-2n}{n-1}}=T +O(\Lambda^{\frac {-1}{n-1}}).
$$

Consider the annulus $\mathcal{A}(a)=\{(\Lambda,\ka): 1 < a \le \Lambda \le a+1\}$.   
There is an $a^*>1$ such that
$\frac 12 T < d\alpha(\Lambda)/d\Lambda <2T$ on $\mathcal{A}(a)$ when $a>a^*$.
Moser's theorem says there is a $\delta >0$  depending on the given data such that if $|F|_\ell < \delta,\,\, |G|_\ell <\delta$ then there is an invariant encircling curve in 
$\mathcal{A}(a)$.   From the above there is an $a^{**}>a^*$ such that for all $a>a^{**}$ we have $|F|_\ell < \delta,\,\, |G|_\ell <\delta$ on $\mathcal{A}(a)$.  So the period map
$\mathcal{P}$ has arbitrarily large invariant encircling curves, all solutions are bounded and the Proposition is established.
\end{proof}

\section*{Reduction to the Special Case} 
In this final section we will finish the proof of the Theorem \ref{mainthm} by proving

\begin{proposition}
There exists an invertible  symplectic change of variables which transforms the original Hamiltonian $H$ in (\ref{originalKk}) 
to the special Hamiltonian $\cH$ in (\ref{HSCKk}).
\end{proposition}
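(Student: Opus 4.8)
The plan is to normalize $H$ by the \emph{method of Lie transforms}, stripping the angle dependence off the finitely many middle-order terms $j=2,\dots,2n-1$ and letting the unavoidable residue cascade down into the orders $j\le 1$, where the special form (\ref{HSCKk}) is allowed to retain $\ka$-dependence. Write $H=H_0+\sum_{j=1}^{2n-1}K^{j/(n+1)}f_j(\ka,t)$ with $H_0=\frac{n+1}{2n}K^{2n/(n+1)}$. A symplectic change of variables realized as the time-one flow of the Hamiltonian vector field of a generator $W(K,\ka,t)$ carries $H$ into
$$
\cH = H + \{H,W\} + \frac{\partial W}{\partial t} + \tfrac12\bigl\{\{H,W\},W\bigr\} + \cdots ,
$$
and the key structural fact is that the leading operator is invertible by a single quadrature: since $H_0$ is independent of $\ka$,
$$
\{H_0,W\} = -\frac{\partial H_0}{\partial K}\,\frac{\partial W}{\partial \ka} = -\Lambda\,\frac{\partial W}{\partial \ka},\qquad \Lambda=K^{\frac{n-1}{n+1}} .
$$
To clear the oscillating part $\tilde f_j=f_j-\overline f_j$ (with $\overline f_j$ the $\ka$-average) of the order-$j$ term one therefore solves the homological equation $\{H_0,W_j\}=-K^{j/(n+1)}\tilde f_j$, whose solution
$$
W_j = K^{\frac{j-n+1}{n+1}}\int_0^{\ka}\tilde f_j(\ka',t)\,d\ka'
$$
is $4\tau$-periodic precisely because $\tilde f_j$ has zero mean, and is analytic in $\ka$ since $\sn$ is.

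Next I would run this construction in descending order $j=2n-1,2n-2,\dots,2$. The transform generated by $W_j$ replaces the order-$j$ term by its average $\overline f_j(t)$ and produces only strictly lower-order contributions: the inhomogeneous piece $\partial W_j/\partial t$ at order $(j-n+1)/(n+1)$, the brackets $\{K^{i/(n+1)}f_i,W_j\}$ at order $(i+j-2n)/(n+1)<j/(n+1)$, and the higher iterated brackets which are lower still. Since every newly created exponent lies strictly below $j/(n+1)$ and the normalization window $\{2,\dots,2n-1\}$ is a finite lattice, after finitely many steps all exponents in that window are angle-free while the angle-dependent remainder runs over $j\le 1$ down to $-\infty$, giving exactly the shape of (\ref{HSCKk}). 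This descending scheme also explains the differentiability hypotheses: the first normalization of $f_j$ feeds $\partial W_j/\partial t\propto K^{(j-n+1)/(n+1)}\dot p_{j-1}$ back into order $j-n+1$, which re-enters the window precisely when $j\ge n+1$ and then demands a \emph{second} $t$-derivative of $p_{j-1}$. Hence $p_n,\dots,p_{2n-2}$ (the coefficients of $f_{n+1},\dots,f_{2n-1}$) must be $C^2$, $p_1,\dots,p_{n-1}$ only $C^1$, and $p_0$, sitting in the never-normalized order $j=1$, only $C^0$---exactly the hypotheses of Theorem \ref{mainthm}.

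The hard part will be \emph{convergence}: promoting this formal normal form to an honest invertible symplectic map whose transformed Hamiltonian really is the series (\ref{HSCKk}), uniformly convergent for $K\ge\K$. Because the in-range orders form a finite lattice and clearing the top order produces oscillation only strictly below it, the normalization terminates after finitely many Lie steps; the change of variables is then a finite composition of time-one Hamiltonian flows, hence invertible (reverse each flow, i.e.\ run $-W_j$) and analytic in $\ka$ on a common strip, with no loss from iterated angle-derivatives. What remains is to show the $K$-expansion of this map converges for large $K$: I would introduce weighted sup-norms on the region $K\ge\K$ and use that each Poisson bracket lowers the $K$-exponent by a fixed positive amount, bounded below by $1/(n+1)$, while the analytic factors $\sn,\cn$ and their primitives, together with the bounded periodic coefficients $\overline f_j$, $\dot p$, $\ddot p$, stay uniformly controlled; this dominates each Lie series by a geometric series in a negative power of $K$, and choosing $\K$ large yields conditions (1)--(3). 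I expect the delicate point to be precisely this uniform estimate---confirming the exponent gap at every bracket and that the number of $t$-derivatives landing on any $p_j$ never exceeds its assumed regularity---whereas the algebra of each individual homological step is routine.
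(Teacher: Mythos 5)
Your proposal is correct and follows essentially the same route as the paper: a Lie-transform normalization in action--angle variables, with the homological equation solved by quadrature against the zero-mean part of each coefficient, a finite generator so that invertibility and convergence for large $K$ reduce to controlling a time-one Hamiltonian flow, the time dependence handled through the transform of $\partial W/\partial t$, and exactly the paper's differentiability bookkeeping ($C^2$ for $p_n,\dots,p_{2n-2}$, $C^1$ for $p_1,\dots,p_{n-1}$, $C^0$ for $p_0$). The only difference is organizational: you clear one homogeneous order at a time in descending order, whereas the paper packages the same cascade into two Deprit Lie-triangle transformations (normalizing rows $1,\dots,n-1$ and then rows $n+1,\dots,2n$, with the graded spaces $\mathcal{P}_r,\mathcal{Q}_r,\mathcal{R}_r$ tracking the powers of $K$) and establishes convergence at $\varepsilon=1$ by the differential-inequality argument of its Lemma, which is the estimate you defer to the end.
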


More precisely we will construct two symplectic changes of variables, where the first one transforms the original Hamiltonian into an intermediate Hamiltonian of the form
\begin{equation} \label{intermediateKk}
\cH_1=\frac {n+1}{2n} K^{\frac{2n}{n+1}}+\sum_{j=n+1}^{2n-1}K^{\frac{j}{n+1}}\overline{f}_j(t)+\sum_{j=-\infty}^{n}K^{\frac{j}{n+1}}f_j(\ka,t).
\end{equation}
The second symplectic transformation will then transform (\ref{intermediateKk}) into the special Hamiltonian $\cH$ given in (\ref{HSCKk}).  Note that  $\overline{f}_j(t)$ and ${f}_j(\ka,t)$  represent  different functions from those in $\cH$, but with the same properties  as described earlier.  It should be noted that in  $\cH_1$  terms with $K^{\frac{j}{n+1}}$ still depend on $\ka$ when $j\le n$ whereas that statement holds in $\cH$ only for $j\le 1$.

Special care will be taken to show that each transformation is convergent, taking domain to domain and that the precise differentiability of the $p_i(t)$'s is observed. 
The change of variables is constructed by the method of Lie transforms of Deprit \cite{andre}.  
See   \cite{lietutoral,meyer} for the complete details of the Lie transformation method and for the source for our notation.

To this end we introduce a parameter  $\varepsilon$ and consider the Hamiltonian 
\begin{equation}\label{Hlowerstar} 
\begin{array}{rcl}
H_*(K,\ka,t,\varepsilon) & = & \ds \frac {n+1}{2n} K^{\frac{2n}{n+1}}+
\sum_{j=1}^{2n-1}K^{\frac{j}{n+1}}\varepsilon^{2n-j}f_j(\ka,t),
\\
&=&\ds \sum_{i=0}^{\infty}\frac {\varepsilon^i}{i!}H_i^0(K,\ka,t),
\end{array}
\end{equation}
where 
\begin{equation}\label{Hupper0}
\begin{array}{rcl}
\ds
H_0^0&=&\frac{n+1}{2n}K^{\frac{2n}{n+1}},\\ \\ \ds
H_i^0&= &i! K^{\frac{2n-i}{n+1}}f_{2n-i}(\ka,t), \quad \hbox{ for } i=1,2,\ldots,2n-1,\\ \\ \ds
H_i^0&=&0, \qquad\qquad\qquad\qquad\; \hbox{ for } i=2n,2n+1,\ldots.
\end{array}
\end{equation}
  The parameter $\varepsilon$ is usually consider  small so that it generates a near identity transformation, but in our case the original Hamiltonian $H$ in (\ref{originalKk}) is  obtained from $H_*$ in (\ref{Hlowerstar}) by setting $\varepsilon=1$.  Therefore we need to construct the change of variables, which is valid and convergent  when $\varepsilon=1$.  This is accomplished by taking only a finite number terms in the generating function $W$ and with careful estimates.

The general procedure is to  expand everything in the parameter $\varepsilon$ and use the following notation.  Introduce a double indexed 
array of functions $H_j^i$ so that the  Hamiltonian is $H_*$ in $(\ref{Hlowerstar})$ is transformed to the  Hamiltonian 
\begin{equation}\label{Hupperstar}
H^*(K,\ka,t,\varepsilon)=\sum_{i=0}^\infty \frac{\varepsilon^i}{i!}H^i_0 (K,\ka,t).
\end{equation}
The generating function for the transformation is
\begin{equation}\label{gen}
W(K,\ka,t,\varepsilon)=\sum_{k=0}^\infty \frac{\varepsilon^k}{k!} W_{k+1} (K,\ka,t).
\end{equation}
One computes the transformation via a Lie triangle, whose entries are given by 
\begin{equation}\label{lietriangle}
H_j^i=H_{j+1}^{i-1}+\sum_{k=0}^j \binom{j}{k}  \{H_{j-k}^{i-1},W_{k+1}\}.
\end{equation}
The interdependence of the functions $\{H^i_j\}$ can easily be understood by
considering the Lie triangle
$$
\begin{array}{ccccc}
H_0^0
\\
\downarrow
\\
H_1^0 & \rightarrow & H_0^1
\\
\downarrow & & \downarrow
\\
H_2^0 & \rightarrow & H_1^1 & \rightarrow & H_0^2 \, 
\\
\downarrow & & \downarrow & & \downarrow
\end{array}
$$
The coefficients of the expansion of the old function $H_*$ are in the left
column, and those of the new function $H^*$ are on the diagonal.  Formula (\ref{lietriangle})
states that to calculate any element in the Lie triangle, one needs the entries
in the column one step to the left and up.

Since the Hamiltonians depend on $t$ the remainder function must be computed by a similar Lie triangle, which  gives rise to differentiability  requirements on the $p_j(t)$'s. 
In our case the dependency on $t$ is not initially important and our goal is only to make  the first few terms $H_0^i$  independent of $\ka$.  As a benefit of this approach  we can compute the remainder function $R$ as the transformation of $ -\partial  W/\partial t$  after $W(K,\ka,t,\ep)$ has been determined.

Each transformation will be done in three steps. First the Lie transformation will be applied ignoring the $t$ dependence, 
then the transformation is shown to be convergent up to $\ep=1$, and finally the remainder term will be computed.
The three steps given below are for the first transformation and then the modifications for the second transformation will
be discussed.

\subsubsection*{The First Transformation}
We will use the algorithm summarized in Theorem 10.3.1 of \cite{meyer} and to that end we introduce three sequences of linear spaces $\mathcal{P}_r,\,\mathcal{Q}_r,\,\mathcal{R}_r$ where $r$ is a row index, $r=0,1,\ldots$.  Specifically

\begin{tabular}{ll}
$\mathcal{P}_r$ is the set of all functions of the form $K^{\frac {2n-r}{n+1}}F(\ka,t)$, & {\em (Row terms),} 
\\
$\mathcal{Q}_r$ is the set of all functions of the form $K^{\frac {2n-r}{n+1}}\overline{F}(t)$,& {\em (Reduced terms),}
\\
$\mathcal{R}_r$ is the set of all functions of the form $K^{\frac {n-r+1}{n+1}}\tilde{F}(\ka,t)$, & {\em (W terms),}
\end{tabular}

\noindent
where $F(\ka,t)$ is $4\tau$-periodic in $\ka$ and $T$-periodic in $t$, $\tilde{F}(\ka,t)$ is $4\tau$-periodic in $\ka$ with mean value zero and $T$-periodic in $t$, and $\overline{F}(t)$ is $T$-periodic in $t$.

Now check the hypotheses.  Clearly $H_r^0\in \mathcal{P}_r$ and $\mathcal{Q}_r \subset \mathcal{P}_r$. To check that  
$\{\mathcal{P}_r,\mathcal{R}_s\} \subset \mathcal{P}_{r+s}$ let $A=K^{(2n-r)/(n+1)}F_r(\ka,t)\in \mathcal{P}_r$ and 
 $B=K^{(n-s+1)/(n+1)}\tilde{F}_s(\ka,t)\in \mathcal{R}_s$. Since the functions $F_r$ and $F_s$ are generic functions it is enough to check  
the powers of $K$ in  
$$
\{A,B\} = \frac {\partial A}{\partial K}\frac {\partial B}{\partial \ka}-\frac {\partial A}{\partial \ka}\frac {\partial B}{\partial K}.
$$
The powers are
$$
\left(\frac {2n-r}{n+1}-1 \right)+\left(\frac {n-s+1}{n+1}\right)=\left(\frac {2n-r}{n+1} \right)+\left(\frac {n-s+1}{n+1}-1\right)=\frac {2n-r-s}{n+1}
$$
and therefore $\{A,B\}\in \mathcal{P}_{r+s}$.

   Next we need to show that for any $D\in \mathcal{P}_r$ there is a solution pair $B\in \mathcal{Q}_r,\,C\in \mathcal{R}_r$ that satisfy the Lie equation
$$
B=D+\{H_0^0,C\}.
$$
Given $D=K^{\frac {2n-r}{n+1}}F_r(\ka,t)$ define $B=K^{\frac {2n-r}{n+1}}\overline{F}_r(t)$ where 
$\overline{F}_r(t)$  is the $\ka$ mean value of $F_r(\ka,t)$ and seek 
$C=K^{\frac {n-r+1}{n+1}}\tF_r(\ka,t)$. We need to solve
$$
0=K^{\frac {2n-r}{n+1}}(F_r(\ka,t)-\overline{F}_r(t))+K^{\frac {2n-r}{n+1}}\frac {\partial \tF_r}{\partial \ka}(\ka,t),
$$
and 
$$
\tF_r(\ka,t)=-\int_0^{\ka}(F_r(k,t)-\overline{F}_r(t))dk
$$
does the trick and with it we have $W_r(K,\ka,t)=K^{\frac {n-r+1}{n+1}}\tF_r(\ka,t)$.

We stop computing new $W$ terms after  $n$ rows and set $W_{j}=0$ for $j\ge n$, so we have constructed a generating function
$$
W(K,\ka,t,\ep)=\sum_{j=0}^{n-1}\frac{\ep^j}{j!}W_{j+1}(K,\ka,t)=
\sum_{j=0}^{n-1}\frac {\ep^j}{j!}K^{\frac {n-j}{n+1}}\tF_j(\ka,t).
$$
Thus $W$  transforms (\ref{Hlowerstar}) to (\ref{Hupperstar}) where the terms are of the form

\begin{eqnarray}\label{H0j}
H_0^0&=&\frac{n+1}{2n}K^{\frac{2n}{n+1}},  \label{H00}
\\  
H^j_0&=& K^{\frac{2n-j}{n+1}}\overline{F}_j(t)\quad\qquad \hbox{ for } j=1,\ldots,n,\label{Hi0}
\\ 
H^j_0&=& K^{\frac{2n-j}{n+1}}F_j(\ka,t) \qquad \hbox{ for } j=n+1,\ldots, \infty. \label{Hj0}
\end{eqnarray}

So far the Lie procedure is formal, but the constructed generating function $W$ is  finite so is a convergent series.  Moreover closer inspection reveals that  the following Lemma applies.

\begin{lemma}\label{lemma1}
There exists a constant $\K>0$ such that the transformation generated by $W(K,\ka,t,\ep)$ is uniformly convergent for $K>\K$,
$0 \le \ep \le 1$, all $\ka$ and $t$.  In particular the transformation takes $H_*$ to $H^*$ when $\ep=1$.
\end{lemma}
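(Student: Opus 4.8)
The plan is to realize the transformation generated by $W$ as the time-$\ep$ flow of the Deprit system $dz/d\ep=\{z,W\}$, i.e.
$$
\frac{dK}{d\ep}=\frac{\partial W}{\partial\ka},\qquad \frac{d\ka}{d\ep}=-\frac{\partial W}{\partial K},
$$
and to show that for large starting values the trajectory stays in a fixed annulus $0<K<\infty$ throughout $0\le\ep\le1$. The essential remark is that $W=\sum_{j=0}^{n-1}\frac{\ep^j}{j!}K^{\frac{n-j}{n+1}}\tF_j(\ka,t)$ is a \emph{finite} polynomial in $\ep$ and is analytic in $(K,\ka,t)$ for $K>0$; hence the generating vector field is analytic on $\{K>0\}$ and the only way convergence of the transformation up to $\ep=1$ could fail is for the flow to run into $K=0$ or escape to $K=\infty$. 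So the whole lemma reduces to confining $K(\ep)$.

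First I would record the elementary bounds. Each $\tF_j$ is analytic and $4\tau$-periodic in $\ka$ and continuous and $T$-periodic in $t$, so $\tF_j$ and $\partial\tF_j/\partial\ka$ are uniformly bounded over $\ka\in\R$ and $t\in[0,T]$. The largest power of $K$ occurring in $W$ is the $j=0$ term $K^{\frac{n}{n+1}}$, and for $K\ge1$ every other term $K^{\frac{n-j}{n+1}}$ is dominated by it. Using $\ep^j/j!\le1$ for $0\le\ep\le1$ this yields, for $K\ge1$,
$$
\left|\frac{dK}{d\ep}\right|=\left|\sum_{j=0}^{n-1}\frac{\ep^j}{j!}K^{\frac{n-j}{n+1}}\frac{\partial\tF_j}{\partial\ka}\right|\le C\,K^{\frac{n}{n+1}},
$$
with $C$ depending only on $n$ and the suprema of the $\partial\tF_j/\partial\ka$. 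The crucial feature is that the exponent $\tfrac{n}{n+1}$ is strictly less than $1$.

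Next I would exploit this sublinear growth. Setting $u=K^{\frac1{n+1}}$ gives $du/d\ep=\tfrac1{n+1}K^{-\frac{n}{n+1}}\,dK/d\ep$, so $|du/d\ep|\le C/(n+1)$ as long as $K\ge1$, and integrating over $\ep\le1$ produces
$$
\left|K(\ep)^{\frac1{n+1}}-K^{\frac1{n+1}}\right|\le\frac{C}{n+1}.
$$
Now choose $\K$ with $\K\ge1$ and $\K^{\frac1{n+1}}>\tfrac{2C}{n+1}$. A routine continuity (bootstrap) argument then shows that for any initial $K>\K$ the quantity $K(\ep)^{1/(n+1)}$ stays within $\tfrac{C}{n+1}$ of $K^{1/(n+1)}$, hence between $\tfrac12 K^{1/(n+1)}$ and $\tfrac32 K^{1/(n+1)}$, for all $0\le\ep\le1$; in particular $K(\ep)\ge1$ is maintained, so the estimate used is self-consistent. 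Thus $K(\ep)$ is bounded away from $0$ and from $\infty$, remaining in a fixed annulus comparable to its starting value. The angle needs no separate control: $\ka$ lives on the circle of circumference $4\tau$, and in fact $d\ka/d\ep=O(K^{-1/(n+1)})$ is small for large $K$.

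With the trajectory confined to the analyticity region $K>\tfrac12\K^{1/(n+1)}\cdot(\cdots)>0$, the standard existence, uniqueness and analytic-dependence theorems for ordinary differential equations give that the flow is defined for every $\ep\in[0,1]$, depends analytically on $\ep$ and on the initial data, and (being the flow of a Hamiltonian) is symplectic. This is precisely the assertion that the Lie series generated by $W$ converges uniformly for $K>\K$, $0\le\ep\le1$; setting $\ep=1$ yields a genuine symplectic change of variables, and since the series converge the formal output of the Lie triangle is the actual transformed Hamiltonian, so $H_*$ is carried to $H^*$. The one point requiring care — and the only real obstacle — is the passage to the non-small value $\ep=1$, which lies outside the usual small-parameter Lie theory; it is rescued entirely by the observation that every power of $K$ in $W$ is below the first power, so the flow can change $K$ by only a bounded, sublinear amount over a unit $\ep$-interval.
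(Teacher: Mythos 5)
Your proposal is correct and follows essentially the same route as the paper: both reduce the lemma to confining the $\ep$-flow of the finite generating function $W$, make the substitution $\Lambda=K^{1/(n+1)}$ to exploit the sublinear exponent $n/(n+1)$, and conclude that for $K$ large enough the flow stays in $K\ge 1$ up to $\ep=1$, with the $\ka$ equation handled by a trivial bound. Your write-up is if anything slightly cleaner at the integration step, where the paper's displayed inequality $\Lambda_0^n AB\ep\le\Lambda^n\le(\Lambda_0+AB\ep)^n$ appears to contain a typographical slip that your two-sided estimate $|K(\ep)^{1/(n+1)}-K^{1/(n+1)}|\le C/(n+1)$ avoids.
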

\begin{proof}  
Look at  the $K$ equation for the transformation 
$$
 \frac {dK}{d\ep}= \ds \frac{\partial W}{\partial \ka}
=
\sum_{j=0}^{n-1} \frac {\ep^j}{j!}K^{\frac {n-j}{n+1}}\frac {\partial \tF_j}{\partial \ka}(\ka,t),
$$
$$
(n+1) \frac {dK^{\frac 1{n+1}}}{d\ep}=
\sum_{j=0}^{n-1} \frac {\ep^j}{j}K^{\frac {-j}{n+1}}\frac {\partial \tF_j}{\partial \ka}(\ka,t),
$$
or with $\Lambda=K^{\frac 1{n+1}}$
$$
(n+1)\frac {d\Lambda}{d\ep}=\sum_{j=0}^{n-1} \frac {\ep^j}{j!}\Lambda^{-j}\frac {\partial \tF_j}{\partial \ka}(\ka,t).
$$
Now let $(n+1)A=\max |\partial \tF_j/\partial \ka|$ so that
$$
-A\sum_{j=0}^{n-1}\frac {\ep^j}{j!}\Lambda^{-j} \le \frac {d \Lambda}{d\ep}\le
A\sum_{j=0}^{n-1}\frac {\ep^j}{j!}\Lambda^{-j} .
$$
Assume $K\ge 1$ and for $0\le \ep\le 1$  use $\sum_{j=0}^{n-1}\ep^j/j!<B$ so that 
$$
-AB\Lambda^{1-n} \le \frac {d \Lambda}{d\ep}\le AB.
$$
Integrating these inequalities gives
$$
\Lambda_0^nAB\ep  \le \Lambda^n  \le (\Lambda_0+AB\ep)^n.
$$
Thus if $\Lambda_0^n \ge1+ nAB$ or $K_0 \ge (1+nAB)^{\frac {n+1}n}$ the equations can be integrated all the way up to $\ep =1$ and $K(\ep)>1$.

The $\ka$ equation for the transformation is
$$
\ds \frac {d\ka}{d\ep} = \ds -\frac {\partial W}{\partial K}= \ds
-\sum_{j=0}^{n-1} \frac {\ep^j}{j!}\left(\frac {n-j}{n+1}\right)
K^{\frac {-1-j}{n+1}}\tF_j(\ka,t)
$$
Now let $C=\max |\tF_j(\ka,t)|  $  and with $\ds \sum_{j=0}^{n-1} \frac {\ep^j}{j!}\left(\frac {n-j}{n+1}\right)<B $ we have 
$$-BC<\frac {d\ka}{d\ep}<BC $$
  so that also $\ka(\ep)$ exists up until $\ep=1$.
\end{proof}

In order to account for the time dependency we must compute the remainder function $R$, which  is the  transform of $-\partial W/{\partial t}$.
 Since $W$ only involves $p_j(t)$ for $j=n,\ldots,2n-2$ they must be at least $C^1$ so far, but  $p_j(t)$ for $j=0,\ldots,n-1$ have not yet appeared in $W$ so that they only need to be continuous for the first transformation.

 More specifically we have to transform
$$
-\frac{\partial W}{\partial t}=-\sum_{j=0}^{n-2} \frac{\ep^j}{j!} \frac{\partial W_{j+1}}{\partial t}
$$
via another Lie triangle using the same generating function $W$. If the entries for that triangle are denoted by  $R_j^i$  and  
$$ 
R_j^0 = -\frac{\partial W_{j+1}}{\partial t}=-K^{\frac{n-j}{n+1}} \frac{\partial \tF_{j}}{\partial t}(\ka,t) \qquad\hbox{for }\; j=0,1,\ldots
$$
then the remainder function is 
$$
R(K,\ka,t,\ep)=\sum_{i=0}^\infty \frac {\ep^i}{i!}R_0^{i}(K,\ka,t).
$$

The construction of $R$ follows the same argument as given above with two exceptions.  First $W$ is already known with it's entries in $\mathcal{R}_j$ and second the beginning entries are $R_j^0$ are also in $\mathcal{R}_j$ not in $\mathcal{P}_j$.  Thus all the entries of the triangle, $R^i_{j-i}$ are in $\mathcal{R}_j$.  That means  that 
\begin{equation} \label{remainder} 
R_0^j = K^{\frac{n-j}{n+1}} \tilde{G}_j(\ka,t) \quad \hbox{for}\; j=0, 1,\ldots,\infty 
\end{equation}
where $\tilde{G}_j(\ka,t)$ is another periodic function.  

Thus at the end of the first transformation we arrive at the intermediate Hamiltonian 
$$ 
\mathcal{H}_1=H_0^0+\sum_{i=1}^{\infty}\frac 1{i!}(H_0^i+R_0^{i-1}).
$$
The terms for $H_0^i$ are given in (\ref{H00}) to  (\ref{Hj0}). Since $R_0^{i-1}$ contains $K^\frac{n-i}{n+1}$
 it is added to those of (\ref{Hj0}) when terms with the same powers of $K$ are combined. On the other hand combining the terms  does not change those in (\ref{Hi0}).  Thus we  arrive at the  form which was given in (\ref{intermediateKk}).

\subsubsection*{The Second Transformation}  This time we will transform  $\cH_1$ to $\cH$.  So consider 
\begin{equation}\label{intermediate}
H_*(K,\ka,t,\ep)= \sum_{i=0}^\infty \frac {\ep^i}{i!}H_i^0(K,\ka,t)
\end{equation}
where now
\begin{equation}
\begin{array}{rcl}
H_0^0&=&\frac{n+1}{2n}K^{\frac{2n}{n+1}},\\ \\
H_i^0&= & i! K^{\frac{2n-i}{n+1}}\bar{f}_i(t), \quad \hbox{ for } i=1,\ldots,n,\\ \\
H_i^0&=& i!K^{\frac{2n-i}{n+1}}f_i(\ka,t), \quad \hbox{ for } i=n+1,\ldots .
\end{array}
\end{equation}
The intermediate Hamiltonian $\mathcal{H}_1$ is obtained from (\ref{intermediate}) by setting $\ep=1$.  Since the first $n$ rows already have the desired form we set $W_r=0$ for $r=1,\ldots,n$  and determine $W_r$ for $r=n+1,\ldots,2n$ so that the  terms  $H_0^r$ do not depend on $\ka$.  We also set $W_r=0$ for $r=2n+1,\ldots$, so that Lemma \ref{lemma1} can be used which  shows that also this transformation  is convergent for $\ep=1$.  The remainder is calculated as before and with it we find 
$$ 
\mathcal{H}=H_0^0+\sum_{i=1}^{\infty}\frac 1{i!}(H_0^i+R_0^{i-1}).
$$
Finally by grouping terms with the same powers of $K$ we see that we have obtained $\cH$ in the form as displayed in (\ref{HSCKk}).  

  The first transformation removed the $\ka$ dependences for terms in rows $r=1,\ldots,n-1$ of the Lie triangle, and the remainder function required that the $p_j(t),\,j=n,\ldots,2n-2$ be $C^1$.  The second transformation removed the $\ka$ dependences for terms which appear in rows $r=n,\ldots,2n-2$. It required that the $p_j(t),\,j=1,\ldots,n-1$ be $C^1$, but  the $p_j(t)$ for $j=n,\ldots,2n-2$ also appeared in the new generating function $W$ so that they must be $C^2$ in total. However $p_0(t)$ did not occur in either of the generating functions  so that it needs only to be $C^0$.

\end{document}